\title[Topological properties of the unitary group]
{Topological properties of the unitary group}
\author{Jes\'us  Espinoza}
\address{Departamento de matem\'aticas, Universidad de Sonora,
 Blvd Lu\'is Encinas y Rosales S/N, Colonia Centro. Edificio 3K-1. C.P. 83000. Hermosillo, Sonora, M\'exico }
 \email{goku.espinoza@gmail.com}    
\author{Bernardo Uribe}
\address{Departamento de Matem\'aticas y Estad\'istica\\ Universidad del Norte\\ Km 5 V\'ia Puerto Colombia\\ Barranquilla\\ Colombia}
\email{bjongbloed@uninorte.edu.co}
  \subjclass[2010]{
(primary) 47D03 (secondary) 57S20}
\keywords{Unitary group, strong operator topology, compact-open topology.}
\thanks{The first author acknowledges the financial support of CONACyT and of the Universidad de Sonora. The second author acknowledges the financial support of the Alexander von Humboldt Foundation.}
\DeclareMathAlphabet\EuR{U}{eur}{m}{n}
\SetMathAlphabet\EuR{bold}{U}{eur}{b}{n}
\theoremstyle{plain}
\newtheorem{theorem}{Theorem}[section]
\newtheorem{lemma}[theorem]{Lemma}
\theoremstyle{definition}
\global\let\c@equation=\c@theorem}
\newcommand{\comsquare}[8]                   
{\begin{CD}
#1 @>#2>> #3\\
@V{#4}VV @V{#5}VV\\
#6 @>#7>> #8
\end{CD}
}
\newcommand{\xycomsquare}[8]                   
{\xymatrix
{#1 \ar[r]^{#2} \ar[d]^{#4} &
#3 \ar[d]^{#5}  \\
#6\ar[r]^{#7} &
#8
}
}
\newcommand{\UH}{{U(\mathcal{H})}}
\newcommand{\BH}{{B(\mathcal{H})}}
\newcommand{\HH}{\mathcal{H}}
\newcommand{\naturals}{\mathbb{N}}
\newcommand{\higherlim}[3]{{\setbox1=\hbox{\rm lim}
        \setbox2=\hbox to \wd1{\leftarrowfill} \ht2=0pt \dp2=-1pt
        \mathop{\vtop{\baselineskip=5pt\box1\box2}}
        _{#1}}^{#2}#3}
\newcommand{\version}[1]                       
{\begin{center} last edited on #1\\
last compiled on \today\\
name of texfile: \jobname
\end{center}
}
\newcounter{commentcounter}
\begin{document}

\typeout{----------------------------  linluesau.tex  ----------------------------}


\typeout{------------------------------------ Abstract ----------------------------------------}

\begin{abstract}
We show that the strong operator topology, the weak operator topology and the compact-open topology agree 
on the space of unitary operators of a infinite dimensional separable Hilbert space. Moreover, we show that the unitary group endowed with any of these
topologies  is a Polish group.  

  \end{abstract}

\maketitle

\section*{Introduction}\label{Introduction}

The purpose of this short note is to settle some topological properties of the unitary group $\UH$ of an infinite dimensional separable Hilbert space $\HH$, 
whenever the group is endowed with the compact open topology. 

When dealing with equivariant Hilbert bundles  and its
relation with its associated unitary principal equivariant bundles (see \cite{AtiyahSegal}), one is obliged to
consider the compact-open topology on the structural group $\UH$. 

In one of the foundational papers for twisted equivariant K-theory, Atiyah and Segal claimed that the unitary
group endowed with the compact-open topology was not a topological group \cite[Page 40]{AtiyahSegal}, based on 
the fact that  the inverse map on $GL(\HH)$ is not continuous when $GL(\HH)$ is endowed with the strong operator topology.
This unfortunate claim obliged Atiyah and Segal to device a set of ingenious constructions in order to make $\UH$ into a topological
group with the desired topological properties suited for the classification of Fredholm bundles.  Nevertheless, these ingenious constructions
of Atiyah and Segal added difficulties on the quest of finding local cross sections for equivariant projective bundles, and therefore a 
clarification on the veracity of the claim was due.

The purpose of this note is to show that  the unitary group $\UH$ endowed with the compact-open topology is indeed a topological group,
moreover a Polish group, and that this topology agrees with the strong operator topology, as the weak operator topology; this is the content of Theorem \ref{thm:topologies-on-U(H)-agree} which is the main result of this note.

\section{Operator topologies on the unitary group}

Let $\HH$ be a separable Hilbert space with inner product $\langle,\rangle$ inducing the norm $|x|:=\sqrt{\langle x,x \rangle}$
for  $ x \in \HH$. Let  $L(\HH,\HH)$
 denote the vector space of linear operators from $\HH$ to $\HH$ 
 and denote by $\BH$ the vector space of bounded linear operators 
 $$\BH:= \{T \in L(\HH,\HH) \colon \exists M >0 \ \ \forall x \in \HH \ \ |Tx| \leq M|x| \}.$$
The space of bounded linear operators $\BH$ endowed with the norm
 $$|T| = \sup_{|x| \leq 1} |Tx|$$
becomes a Banach space. Denote the adjoint operator $T^*$ as the operator defined by the equality
$\langle Tx,y \rangle = \langle x,T^*y \rangle$
for all $x,y \in \HH$.

The space $\BH$ can be endowed with several operator topologies and we will list the ones that interest us in this work.
They all can be defined by specifying which are the convergent sequences. Then let $\{T_k\}$ be a sequence of linear operators on $\BH$ and denote by $T_k \to_? T$  the statement that $T_k$ converges to $T$ in the topology ?, and denote $\BH_?$ the space
of bounded operators endowed with the topology ?. In what follows the notation $x_k \to x$ denotes
that the sequence $\{ x_k \}_k$ in $\HH$ converges to $x$.
\begin{itemize}
\item{\bf{Norm Topology (Uniform convergence):} }\\ $T_k \to_n T$ if $\sup_{ |x|  \leq 1 }\{ |T_kx -Tx|   \} \to 0$.
\item{\bf{Strong Operator Topology (Pointwise convergence):}}\\ $T_k \to_s T$ if for all $x \in \HH$, $T_kx \to Tx$.
\item{\bf{Weak Operator Topology:}}\\ $T_k \to_w T$ if for all $x,y \in \HH$, $\langle T_kx,y \rangle  \to \langle Tx,y \rangle$
\item{\bf{Compact Open Topology (Uniform convergence on compact sets):}}\\ $T_k \to_{co} T$ if for all compact subsets $C \subset \HH$, the restricted sequence
$\{T_k|_C\}_k$ converges uniformly to $T|_C$.
\item{\bf{Strong* Operator Topology:}}\\ $T_k \to_{s^*} T$ if  both $T_k \to_{s} T$ and  $T_k^* \to_{s} T^*$.
\item{\bf{Weak* Operator Topology:}}\\ $T_k \to_{w^*} T$ if both $T_k \to_{w} T$ and  $T_k^* \to_{w} T^*$.
\item{\bf{Compact Open* Topology:}}\\ $T_k \to_{co^*} T$ if both $T_k \to_{co} T$ and  $T_k^* \to_{co} T^*$.
\end{itemize}

The identity map on $\BH$ induces the following commutative diagram of continuous maps
\begin{equation}
\xymatrix{
\BH_n \ar[r] & \BH_{co^*} \ar[d] \ar[r] & \BH_{s^*} \ar[d] \ar[r] & \BH_{w^*} \ar[d]  \\
 & \BH_{co} \ar[r] & \BH_{s} \ar[r] & \BH_{w} 
} \label{diagram_B(H)}
\end{equation}
with the property that none of the maps is a homeomorphism.

Let $\UH$ denote the group of unitary operators on $\HH$, i.e.
$$\UH=\{ U \in \BH \colon U U^*=U^*U= \rm{Id}_\HH \},$$
and note that $U^{-1}=U^*$ and that 
$\langle x,y \rangle = \langle Ux,Uy \rangle$ for all $x,y \in \HH$. Since $\UH \subset \BH$, the group $\UH$
can be endowed with any of the operator topologies previously defined. The group $\UH_n$ endowed with the 
norm topology is a topological group and it is the prototypical example of what is known as a Banach Lie group, see 
\cite{Neeb}. The fact that $\UH$ endowed with any of the other topologies defined above is also a topological group
is the main result of this section 

\begin{theorem} \label{thm:topologies-on-U(H)-agree}
The operator topologies: compact open, strong, weak and their * counterparts, all agree on the group $\UH$, i.e.
$$\UH_{co^*} = \UH_{s^*} = \UH_{w^*} = \UH_{co} = \UH_{s} = \UH_{w}.$$
Moreover, the group $\UH$ endowed with any of these topologies is a Polish group, i.e. a completely metrizable 
topological group. 
\end{theorem}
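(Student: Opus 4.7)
The plan is to collapse all six topologies on $\UH$ by proving the chain of sequential implications $\UH_w \Rightarrow \UH_{s^*} \Rightarrow \UH_{co^*}$, which together with the continuous maps in diagram (\ref{diagram_B(H)}) shows all six coincide. Since $\HH$ is separable, the weak topology on the norm-bounded set $\UH$ is metrizable, so working with sequences suffices. The crucial first step is the \emph{weak-to-strong upgrade on $\UH$}: if $U_k \to_w U$ with each $U_k, U$ unitary, then the identity
\[
|U_k x - Ux|^2 = |U_k x|^2 + |Ux|^2 - 2\mathrm{Re}\langle U_k x, Ux\rangle = 2|x|^2 - 2\mathrm{Re}\langle U_k x, Ux\rangle
\]
together with $\langle U_k x, Ux\rangle \to \langle Ux, Ux\rangle = |x|^2$ gives $U_k \to_s U$. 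Next I would note that the adjoint is always weakly continuous on $\BH$ (since $\langle T_k^* x, y\rangle = \overline{\langle T_k y, x\rangle}$), so from $U_k \to_w U$ we also get $U_k^* \to_w U^*$, and the same upgrade yields $U_k^* \to_s U^*$; hence $U_k \to_{s^*} U$.

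For the last jump $\UH_s \Rightarrow \UH_{co}$, I would exploit that $\UH$ is norm-bounded by $1$, so the family $\{U_k, U\}$ is equicontinuous. Given compact $C \subset \HH$ and $\epsilon>0$, cover $C$ by finitely many $\epsilon$-balls around $x_1,\ldots,x_N$, pick $K$ with $|U_k x_i - Ux_i|<\epsilon$ for $k\geq K$ and all $i$, and for arbitrary $x \in C$ close to some $x_i$ bound $|U_k x - Ux| \leq |U_k(x-x_i)| + |U_k x_i - Ux_i| + |U(x_i-x)| \leq 3\epsilon$. This gives $U_k\to_{co} U$, and applied also to the adjoints yields $\to_{co^*}$. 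Combined with the trivial implications in (\ref{diagram_B(H)}), all six topologies agree on $\UH$.

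For the topological group structure in this common topology I would verify continuity of multiplication and inversion working in the strong topology. Multiplication: from $U_k\to_s U$, $V_k\to_s V$, the decomposition $U_k V_k x - UV x = U_k(V_k x - Vx) + (U_k-U)Vx$ shows both terms tend to $0$ using $|U_k|=1$. Inversion: $U \mapsto U^{-1} = U^*$ is precisely the adjoint map, which is continuous because the strong and strong$^*$ topologies coincide on $\UH$ by the previous paragraph.

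Finally, for the Polish structure, fix an orthonormal basis $\{e_n\}$ and consider the metric
\[
d(U,V) = \sum_{n\geq 1} 2^{-n}\bigl(\min(1,|(U-V)e_n|) + \min(1,|(U^*-V^*)e_n|)\bigr),
\]
which metrizes the strong$^*$ topology on $\UH$ (uniform boundedness $|U|=1$ lets one upgrade convergence on $\{e_n\}$ to convergence on all of $\HH$). Separability follows from separability of $\HH$: the union of the finite-dimensional subgroups acting by rational unitaries on finite rational subspaces of $\HH$ (extended by the identity) is countable and $d$-dense. The main obstacle is \emph{completeness}: given a $d$-Cauchy sequence $(U_k)$, both $U_k x$ and $U_k^* x$ are Cauchy in $\HH$ for every $x$ (using the density of $\{e_n\}$ and the uniform norm bound), so they converge to linear isometries $U$ and $V$ respectively; the identity $\langle U x, y\rangle = \lim \langle U_k x, y\rangle = \lim \langle x, U_k^* y\rangle = \langle x, V y\rangle$ forces $V=U^*$, whence $U U^* = U^* U = \mathrm{Id}$ and $U \in \UH$. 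Thus $\UH$ is completely metrizable and separable, i.e. Polish.
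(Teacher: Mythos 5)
Your proof is correct, and while the backbone (the identity $|U_kx-Ux|^2=2|x|^2-2\mathrm{Re}\langle U_kx,Ux\rangle$ for the weak-to-strong upgrade, the weak continuity of the adjoint, and the decomposition $U_kV_kx-UVx=U_k(V_kx-Vx)+(U_k-U)Vx$ for multiplication) coincides with the paper's Lemmas \ref{lem:adjoint-continuous}--\ref{lem:weak=strong}, you diverge in two places, in both cases taking a more elementary and self-contained route. For $\UH_{co}=\UH_s$ the paper invokes the Banach--Steinhaus theorem together with Steenrod's compactly generated retraction functor $k$, arguing that the two topologies have the same compact subsets and that both spaces are compactly generated; your direct $3\epsilon$ argument using the equicontinuity $|U_k(x-x_i)|=|x-x_i|$ of isometries proves the same implication with no machinery (it is essentially the proof of the Banach--Steinhaus corollary specialized to norm-one operators). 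For complete metrizability the paper embeds $\UH_s$ into $\HH^\naturals$ via $T\mapsto (Te_j)_j$, obtaining a metric for the strong topology under which $\UH$ is \emph{not} complete, and must then cite Neeb's observation that $\UH$ is a $G_\delta$ subset of the complete space $\mathrm{Iso}(\HH)$; your strong* metric $d$, which incorporates the adjoints, makes $\UH$ complete outright (the limit of a Cauchy sequence is an isometry whose adjoint is also an isometry, hence unitary), which is arguably the cleaner statement since it isolates exactly why one needs the $*$. You are also more careful than the paper about the sequences-versus-topologies issue: your opening remark that the weak operator topology is metrizable on the norm-bounded set $\UH$ is what licenses the purely sequential arguments, whereas the paper defines its topologies by their convergent sequences and leaves this implicit. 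The only point I would ask you to expand is the density claim for the union of the finite-rank rational unitary groups: it requires the (true, but not one-line) fact that any unitary can be strongly approximated by unitaries acting on $\mathrm{span}(e_1,\dots,e_M)$ and fixing the complement, via a Gram--Schmidt correction of the truncated images $P_MUe_i$; alternatively, separability of each compact group $U(M)$ suffices and avoids the "rational unitary" issue.
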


The proof of the theorem is built out from various Lemmas:

\begin{lemma} \label{lem:adjoint-continuous}
The map $\UH_w \to \UH_w$, $T \mapsto T^*$ is continuous. In particular $\UH_{w^*}=\UH_w$.
\end{lemma}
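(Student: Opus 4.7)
The plan is to exploit the identity
\[
\langle T^*x,y\rangle \;=\; \overline{\langle y, T^*x\rangle} \;=\; \overline{\langle Ty, x\rangle},
\]
which holds for every bounded operator $T\in\BH$ (not just the unitary ones). This identity shows that every matrix coefficient of $T^*$ is the complex conjugate of a matrix coefficient of $T$, with the roles of $x$ and $y$ swapped.

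First I would reformulate the weak operator topology intrinsically: on $\BH$ it is the initial (coarsest) topology making each linear functional $\varphi_{x,y}\colon T\mapsto\langle Tx,y\rangle$ continuous, so its restriction to $\UH$ is generated by the same functionals. Continuity of the adjoint $*\colon \UH_w\to\UH_w$ is therefore equivalent to continuity of $\varphi_{x,y}\circ *$ for every $x,y\in\HH$. By the identity above,
\[
(\varphi_{x,y}\circ *)(T) \;=\; \langle T^*x,y\rangle \;=\; \overline{\varphi_{y,x}(T)},
\]
and since complex conjugation $\IC\to\IC$ is continuous and $\varphi_{y,x}$ is weakly continuous by definition, the composition $\overline{\varphi_{y,x}}$ is weakly continuous. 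Hence the adjoint is continuous on $\BH_w$, and a fortiori on $\UH_w$.

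For the ``in particular'' part, I would observe that $\UH_{w^*}$ is by construction the coarsest topology on $\UH$ making both $T\mapsto T$ and $T\mapsto T^*$ continuous into $\UH_w$; equivalently, it is the topology pulled back along $(T,T^*)\colon \UH\to\UH_w\times\UH_w$. Since we have just shown that the second coordinate $T\mapsto T^*$ is already continuous as a map $\UH_w\to\UH_w$, the $w^*$-topology agrees with the $w$-topology on $\UH$.

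I do not foresee a genuine obstacle here: the content of the lemma is essentially the defining identity of the adjoint combined with the fact that the weak operator topology is defined in terms of matrix coefficients. The only care to exercise is to argue via the generating functionals of the initial topology rather than via convergent sequences, since the operator topologies in question are not a priori first countable on all of $\BH$; once this is done, neither unitarity nor any metrizability hypothesis is actually used in this particular step.
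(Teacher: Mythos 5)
Your proof is correct, and it rests on exactly the same algebraic identity as the paper's: the defining relation of the adjoint combined with conjugate symmetry of the inner product, giving $\langle T^*x,y\rangle=\overline{\langle Ty,x\rangle}$. The difference is purely in how continuity is certified. The paper takes a convergent sequence $T_k\to_w T$ and checks $\langle x,T_k^*y\rangle=\langle T_kx,y\rangle\to\langle Tx,y\rangle=\langle x,T^*y\rangle$, which is consistent with its stated convention of describing all the operator topologies by their convergent sequences; strictly speaking this only establishes sequential continuity, which is justified a posteriori once the paper shows $\UH_s=\UH_w$ is metrizable, but is not automatic at this point since the weak operator topology on $\BH$ is not first countable. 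You instead invoke the universal property of the initial topology generated by the matrix-coefficient functionals $\varphi_{x,y}$, which yields genuine continuity on all of $\BH_w$ with no appeal to unitarity or metrizability, and your observation that the $w^*$-topology is the pullback along $T\mapsto(T,T^*)$ cleanly disposes of the ``in particular'' clause. So your route is a mild but real strengthening: same computation, but packaged so that it is independent of the sequential framework the paper works in.
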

\begin{proof}
Consider $T_k \to_wT$ and $x,y \in \HH$. Then
$$\langle x, T_k^*y \rangle = \langle T_kx,y \rangle \to \langle Tx,y \rangle = \langle x, T^*y \rangle,$$
and therefore $T^*_k \to_w T^*$. This proves that $T \mapsto T^*$ is continuous. Hence we conclude
that the weak and the weak* topologies agree on $\UH$. 
\end{proof}

\begin{lemma} \label{lem:composition-continuous}
The composition of operators $\UH_s \times \UH_s \to \UH_s$, $(T,S)\mapsto TS$,  is continuous in the strong operator topology.
\end{lemma}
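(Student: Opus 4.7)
The plan is to verify the statement by sequential continuity at an arbitrary pair $(T,S)\in\UH\times\UH$, exploiting the fact that every unitary is an isometry. Since $\HH$ is separable, the strong operator topology on the norm-bounded set $\UH$ is metrizable (for instance by $d(A,B)=\sum_{n}2^{-n}\min\{1,|Ae_n-Be_n|\}$ with $\{e_n\}$ a fixed orthonormal basis), so checking sequences is enough; alternatively the same estimate below works verbatim with nets.

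So assume $T_k\to_s T$ and $S_k\to_s S$ in $\UH$, and fix $x\in\HH$. The key manipulation is the standard telescoping
$$T_k S_k x - TSx \;=\; T_k(S_k x-Sx)\;+\;(T_k-T)(Sx),$$
whence by the triangle inequality
$$|T_k S_k x-TSx|\;\leq\;|T_k(S_kx-Sx)|\;+\;|(T_k-T)(Sx)|\;=\;|S_kx-Sx|\;+\;|(T_k-T)(Sx)|,$$
where the equality uses that $T_k$ is unitary, hence norm-preserving. The first summand tends to $0$ because $S_k\to_s S$ applied at $x$, and the second tends to $0$ because $T_k\to_s T$ applied at the fixed vector $Sx\in\HH$. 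Therefore $T_k S_k x\to TSx$ for every $x$, i.e.\ $T_k S_k\to_s TS$, proving joint continuity of composition.

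The only conceptual obstacle worth flagging is that joint continuity of composition fails on the full $\BH_s$: there one has separate continuity, and joint continuity only on norm-bounded subsets. On $\UH$ both issues evaporate, since the operators have norm $1$ automatically and, more importantly, the isometry property collapses $|T_k(S_kx-Sx)|$ to $|S_kx-Sx|$ with no estimates to perform. Hence no further work beyond the displayed inequality is needed.
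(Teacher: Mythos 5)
Your proof is correct and follows essentially the same route as the paper: the identical telescoping decomposition $T_kS_kx - TSx = T_k(S_kx-Sx) + (T_k-T)(Sx)$, the triangle inequality, and the isometry property of $T_k$ to reduce the first term to $|S_kx - Sx|$. The extra remark justifying the use of sequences via metrizability is a reasonable addition (the paper works with sequences throughout without comment), but the core argument is the same.
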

\begin{proof}
Consider a convergent sequence $(T_k,S_k) \to_s (T,S)$ in $\UH_s\times \UH_s$. For $x \in \HH$ we have 
\begin{align*}
|T_kS_kx - TSx| & = |T_kS_kx - T_kSx + T_kSx -TSx|\\
& \leq  |T_kS_kx - T_kSx| + |T_kSx -TSx|\\
&  = |T_k(S_kx-Sx) | + |(T_k-T)Sx|\\
& =  |S_kx-Sx| + |(T_k-T)Sx|,
\end{align*}
and since $S_kx \to Sx$ and $T_k(Sx) \to T(Sx)$, then we have that $(T_kS_k)x \to (TS)x$.
Therefore the composition of operators is continuous in the strong operator topology.
\end{proof}
\begin{lemma}\label{lem:weak=strong}
The weak and the strong operator topologies agree on $\UH$. Therefore we have that
$\UH_s$ is a topological group and moreover that $\UH_{s^*}=\UH_s=\UH_w$.
\end{lemma}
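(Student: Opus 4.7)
The plan is to prove the equality $\UH_s = \UH_w$ directly by showing convergence in one topology forces convergence in the other, and then to assemble the topological group conclusion from this identification together with Lemmas~\ref{lem:adjoint-continuous} and~\ref{lem:composition-continuous}.

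The inclusion $\UH_s \to \UH_w$ is automatic: if $T_k \to_s T$ then $|\langle T_kx - Tx, y\rangle| \leq |T_kx - Tx|\,|y| \to 0$ by Cauchy--Schwarz, so $T_k \to_w T$. The content is the reverse direction, and this is where the unitarity is essential. Given $T_k \to_w T$ and $x \in \HH$, I would compute
\begin{align*}
|T_kx - Tx|^2 &= \langle T_kx - Tx,\, T_kx - Tx\rangle \\
&= |T_kx|^2 + |Tx|^2 - \langle T_kx, Tx\rangle - \langle Tx, T_kx\rangle \\
&= 2|x|^2 - \langle T_kx, Tx\rangle - \langle Tx, T_kx\rangle,
\end{align*}
using $|T_kx| = |Tx| = |x|$. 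Weak convergence applied to the specific vector $y = Tx$ shows $\langle T_kx, Tx\rangle \to \langle Tx, Tx\rangle = |x|^2$, and conjugating gives $\langle Tx, T_kx\rangle \to |x|^2$ as well. Hence $|T_kx - Tx|^2 \to 0$, i.e.\ $T_k \to_s T$. This exploits the norm preservation $|T_kx| = |x|$, which is the key obstacle: the argument fails for general bounded operators and is precisely what makes the unitary group better behaved than $\BH$.

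Having established $\UH_s = \UH_w$, the rest is assembly. Composition is continuous on $\UH_s$ by Lemma~\ref{lem:composition-continuous}, and the inverse map $T \mapsto T^{-1} = T^*$ is continuous on $\UH_w$ by Lemma~\ref{lem:adjoint-continuous}; transporting the latter through the identification $\UH_s = \UH_w$ shows that inversion is also continuous on $\UH_s$. Therefore $\UH_s$ is a topological group. Finally, since $T \mapsto T^*$ is continuous on $\UH_s$, the strong and strong* topologies coincide, giving the chain $\UH_{s^*} = \UH_s = \UH_w$ asserted in the lemma (and, combined with Lemma~\ref{lem:adjoint-continuous}, also $= \UH_{w^*}$).
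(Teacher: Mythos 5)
Your proposal is correct and follows essentially the same route as the paper: the identity $|T_kx-Tx|^2 = 2|x|^2 - 2\mathrm{Re}\langle T_kx,Tx\rangle$ exploiting norm preservation, followed by assembling the topological group structure from Lemmas~\ref{lem:adjoint-continuous} and~\ref{lem:composition-continuous}. The only differences are cosmetic (you spell out the easy direction via Cauchy--Schwarz and skip the reduction to unit vectors).
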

\begin{proof}
Consider a convergent sequence $T_k \to_w T$ in $\UH_w$. To prove that $T_k \to_s T$ it is enough to show the convergence
$T_kx \to Tx$ for $x$ a unit vector in $\HH$. We have then that $|x|=1$ and therefore $|T_kx|=1=|Tx|$. Now we compute
\begin{align*}
|T_kx-Tx|^2 = |T_kx|^2 -2{\rm{Re}}\langle T_kx,Tx \rangle + |Tx|^2 
= 2 -2{\rm{Re}}\langle T_kx,Tx \rangle
\end{align*}
and since $T_k\to_wT$, we have that $\langle T_kx,Tx \rangle \to \langle Tx,Tx \rangle=1$. Therefore 
$|T_kx-Tx|^2 \to 0$ and hence we conclude that $T_kx \to Tx$. This shows that the identity map $\UH_w \to \UH_s$ is continuous, and 
therefore $\UH_w=\UH_s$.

Now, by Lemma \ref{lem:adjoint-continuous} we know that the inverse map $T \mapsto T^*=T^{-1}$ is continuous, and by Lemma
\ref{lem:composition-continuous} we know that the composition is continuous. Then we have that $\UH$ with the strong (or weak) operator
topology is a topological group; the fact that the strong* topology agrees with the strong topology follows from the continuity of the inverse map. 

\end{proof}

The proofs of the previous lemmas follow the proofs that appear in  \cite[Cor. 9.4]{Hilgert-Neeb}. 

\begin{lemma}
The topological group $\UH_s$ is metrizable.
\end{lemma}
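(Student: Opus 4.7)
The plan is to exploit separability of $\HH$ to build an explicit metric that induces the strong operator topology on $\UH$. Fix a countable dense sequence $\{x_n\}_{n \in \IN}$ in the closed unit ball of $\HH$ and define
$$d(T,S) := \sum_{n=1}^{\infty} \frac{1}{2^n}\, |Tx_n - Sx_n|$$
for $T,S \in \UH$. Since every $U \in \UH$ satisfies $|Ux_n| = |x_n| \leq 1$, each summand is bounded by $2/2^n$, so the series converges absolutely and uniformly in $(T,S)$, and $d$ is clearly a pseudometric. To see that it is a genuine metric, note that if $d(T,S)=0$ then $Tx_n = Sx_n$ for every $n$; by density of $\{x_n\}$ in the unit ball and continuity (boundedness) of $T$ and $S$, it follows that $T = S$.

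Next I would show that $d$ induces exactly the strong operator topology on $\UH$. For one direction, if $T_k \to_s T$ then $|T_k x_n - T x_n| \to 0$ for each $n$, and since each term is dominated by $2/2^n$, dominated convergence for series gives $d(T_k, T) \to 0$. For the converse, assume $d(T_k, T) \to 0$ and fix $x \in \HH$ with $|x| \leq 1$ (the general case reduces to this by scaling). Given $\varepsilon > 0$, pick $n$ with $|x - x_n| < \varepsilon/4$; then for all $k$,
$$|T_k x - T x| \leq |T_k(x - x_n)| + |T_k x_n - T x_n| + |T(x_n - x)| \leq 2|x - x_n| + 2^n\, d(T_k,T),$$
using $|T_k x_n - T x_n| \leq 2^n d(T_k, T)$. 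Choosing $k$ large enough that $2^n d(T_k,T) < \varepsilon/2$ yields $|T_k x - T x| < \varepsilon$, so $T_k x \to T x$ and hence $T_k \to_s T$.

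I expect no real obstacle here: the only subtlety is making sure the series is well-behaved on $\UH$, which is immediate from the uniform bound $|Ux_n|\le |x_n|$ available to unitary operators (this is exactly where we use that we are working on $\UH$ rather than on $\BH$). Once both directions of convergence are established, $d$ generates the strong operator topology on $\UH$, so $\UH_s$ is metrizable, completing the proof.
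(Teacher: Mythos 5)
Your proof is correct. It takes a slightly different, more explicit route than the paper: the paper fixes an orthonormal basis $\{e_j\}$ and embeds $\UH_s$ into the countable product $\HH^\naturals$ via $T \mapsto (Te_j)_j$, then pulls back the product metric, whereas you write down the induced metric directly from a countable dense family of test vectors. The essential ingredients are identical in both arguments: separability supplies the countable family, and the isometry property of unitaries is what lets you pass from convergence on a dense set to full strong convergence (your step $|T_k(x-x_n)| = |x-x_n|$ is exactly the paper's $2|x - x_j|$ estimate). One minor point of rigor, which applies equally to the paper's own write-up: you verify that $d$-convergence and strong convergence have the same convergent \emph{sequences}, and to literally conclude equality of topologies you should observe that your inequalities already show every $d$-ball contains a basic strong-open neighborhood and conversely --- this is immediate from what you wrote, so it is not a gap. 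One advantage of the paper's embedding formulation is that it sets up the subsequent completeness discussion (the embedding extends to the complete space ${\rm Iso}(\HH) \subset \HH^\naturals$, inside which $\UH$ is a $G_\delta$), which is needed for the Polish group claim in the next lemma; note that your metric itself is \emph{not} complete on $\UH$, since a $d$-Cauchy sequence of unitaries may converge strongly to a non-surjective isometry, so the $G_\delta$ argument cannot be bypassed.
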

\begin{proof}
Consider an orthonormal basis $\{e_j\}_{j \in \naturals}$ of $\HH$. Consider the map
$$\Psi : \UH_s \to \HH^\naturals, \ \ T \mapsto (Te_j)_{j \in \naturals}$$
where $\HH^\naturals$ is endowed with the product topology. For a convergent sequence
$T_k \to_s T$ we have that $T_ke_j \to Te_j$ for all $j \in \naturals$, this implies that
the maps $\UH_s \to \HH, \ T\mapsto Te_j$ are all continuous and therefore by the universal properties of the
product topology we obtain that the map $\Psi$ is continuous. Since the basis $\{e_j\}_{j \in \naturals}$
generates a dense subset of $\HH$, we have that any two operators which agree on the basis $\{e_j\}_{j \in \naturals}$
must be equal; hence we have that the map $\Psi$ is injective.

Now let us show that the map $\Psi$ is an embedding. For this purpose let us take a convergent
sequence  $\Psi(T_k) \to \Psi(T)$ in the image of $\Psi$. Since $T_ke_j \to Te_j$ for all basis vectors,
we  have that the sequence $\{T_k\}_k$ converges pointwise to $T$ in the dense subset of $\HH$ generated
by the basis $\{e_j\}_{j \in \naturals}$, and since the operators are unitary we can conclude that $T_k \to_s T$. Let
us see in more detail this argument: take $x \in \HH$ and let $\{x_j\}_{j \in \naturals}$ with each $x_j$ belonging
to the dense subset generated by $\{e_j\}_{j \in \naturals}$ and such that $x_j \to x$. We compute
\begin{align*}
|T_kx-Tx| & \leq |T_kx - T_kx_j| + |T_kx_j - Tx_j| + |Tx_j -Tx| \\
&= |T_k(x-x_j)| + |T_kx_j - Tx_j| + |T(x_j -x)| \\
&= 2|x-x_j| + |T_kx_j - Tx_j|, 
\end{align*}
and since $T_kx_j \to Tx_j$, we have that $T_kx \to Tx$ and therefore $T_k \to_s T$. This implies that the map $\Psi$
induces a homeomorphism with its image, and hence it is an embedding.

The Hilbert space $\HH$ is a metric space, and the product $\HH^\naturals$ of countable copies of $\HH$ can be endowed
with a metric.  Since $\Psi: \UH_s \to \HH^\naturals$ is an embedding, then $\UH_s$ inherits the induced metric and hence it is 
metrizable. 
\end{proof}
The previous argument follows the proof that appears in \cite[Prop II.1]{Neeb}.
In \cite[Prop II.1]{Neeb} it is also shown that the the map $\Psi$ also provides an embedding
${\rm{Iso}}(\HH) \to \HH^\naturals$ where ${\rm{Iso}}(\HH):= \{T \in \BH \colon T^*T=\rm{Id}_\HH \}$ is the monoid of 
all isometries of $\HH$, which is moreover complete with respect to the induced metric. It is furthermore shown
that $\UH = \{T \in {\rm{Iso}}(\HH) \colon T^* \in {\rm{Iso}}(\HH) \}$ is a $G_\delta$ set in ${\rm{Iso}}(\HH)$, and since
$G_\delta$ sets on a complete metrizable space are complete in the induced metric \cite[Thm 1, p. 93]{Schwartz} this implies that
$\UH_s$ is a complete metrizable space.
\begin{lemma}[{\cite[Prop II.1]{Neeb}}] \label{lem:U(H)-polish} The unitary group $\UH_s$ is a Polish group.
\end{lemma}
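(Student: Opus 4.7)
The plan is to invoke Alexandrov's theorem, which says that a subspace of a completely metrizable space is itself completely metrizable precisely when it is a $G_\delta$. The previous lemma already supplies a compatible metric on $\UH_s$, inherited via the embedding $\Psi : \UH_s \hookrightarrow \HH^\naturals$, and Lemma \ref{lem:weak=strong} already gives the topological group structure. So the task reduces to realizing $\UH_s$ as a $G_\delta$ subset of a completely metrizable ambient space and then noting separability.

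As the ambient space I would take the isometry monoid ${\rm Iso}(\HH) = \{T \in \BH : T^*T = \rm{Id}_\HH\}$ with the strong topology, and first argue that $\Psi$ extends to an embedding ${\rm Iso}(\HH) \hookrightarrow \HH^\naturals$, $T \mapsto (Te_j)_j$, with closed image. The closedness is the key point: if a product-convergent sequence $\Psi(T_k) \to (v_j)_j$ is given, then the orthonormality relations $\langle T_k e_i, T_k e_j\rangle = \delta_{ij}$ pass to the limit by (joint) continuity of the inner product on norm-bounded sets, so $(v_j)_j$ is orthonormal; the unique bounded linear $T$ with $Te_j = v_j$ is then an isometry and satisfies $\Psi(T)=(v_j)_j$. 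Hence ${\rm Iso}(\HH)$ inherits a complete metric from $\HH^\naturals$.

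The heart of the proof, and the step I expect to be the main obstacle, is the $G_\delta$ description
$$\UH = \{T \in {\rm Iso}(\HH) : T^* \in {\rm Iso}(\HH)\} \subset {\rm Iso}(\HH).$$
For an isometry $T$, Parseval applied to $T^*e_j$ expanded in the basis $\{e_k\}$ gives
$$|T^*e_j|^2 \;=\; \sum_{k\ge 1}|\langle T^*e_j,e_k\rangle|^2 \;=\; \sum_{k\ge 1}|\langle e_j,Te_k\rangle|^2 \;\le\; 1,$$
with equality exactly when $e_j$ lies in the range of $T$. Thus $T$ is unitary iff each of these non-decreasing sums of strongly continuous functions attains the value $1$. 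Writing $f_{j,n}(T) := \sum_{k=1}^n |\langle e_j,Te_k\rangle|^2$, each $f_{j,n}$ is continuous on ${\rm Iso}(\HH)$, and
$$\UH \;=\; \bigcap_{j\ge 1}\bigcap_{m\ge 1}\bigcup_{n\ge 1} \bigl\{T \in {\rm Iso}(\HH) : f_{j,n}(T) > 1 - 1/m\bigr\},$$
which is a countable intersection of open sets, as required.

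Assembling the pieces: Alexandrov's theorem supplies a complete compatible metric on $\UH_s$; separability is immediate from the separability of $\HH^\naturals$; and combined with the topological group structure of Lemma \ref{lem:weak=strong} this shows $\UH_s$ is a Polish group.
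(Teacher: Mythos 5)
Your proof is correct and follows essentially the same route the paper takes (which simply cites Neeb's Prop.\ II.1 after sketching exactly this argument: $\Psi$ embeds ${\rm Iso}(\HH)$ as a complete subspace of $\HH^{\mathbb N}$, $\UH$ is a $G_\delta$ in ${\rm Iso}(\HH)$, and $G_\delta$ subsets of completely metrizable spaces are completely metrizable). You have in fact supplied the details --- closedness of $\Psi({\rm Iso}(\HH))$ and the explicit $G_\delta$ description via the partial sums $f_{j,n}$ --- that the paper delegates to the reference.
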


So far we have shown that $\UH_s$ is a Polish group and  we have that $\UH_s=\UH_{s^*}=\UH_w=\UH_{w^*}$. Note in particular that
$\UH_s$ is compactly generated since it is a metrizable space. 

 Let us now see the relation with the compact open topology on $\UH$.

\begin{lemma} \label{lem:compact-open=strong}
The compact open topology and the strong operator topology agree on $\UH$, i.e. $\UH_{co}=\UH_s$.
\end{lemma}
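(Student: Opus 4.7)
The inclusion $\UH_{co} \hookrightarrow \UH_s$ is continuous for free, since the strong operator topology is just pointwise convergence, while the compact-open topology is uniform convergence on compact sets; pointwise convergence is the special case where the compact set is a single point. So the substance of the lemma is to show that the identity map $\UH_s \to \UH_{co}$ is continuous. Since $\UH_s$ has been shown to be metrizable, sequential continuity suffices, and the task reduces to proving: if $T_k \to_s T$ in $\UH$, then $T_k \to T$ uniformly on every compact subset $C \subset \HH$.

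The key input is that unitary operators are isometries, hence $1$-Lipschitz, so the family $\{T_k\} \cup \{T\}$ is uniformly equicontinuous. This is exactly the sort of hypothesis that, combined with compactness of $C$, lets one upgrade pointwise convergence on a dense (or merely $\epsilon$-net) subset to uniform convergence on $C$ — essentially an Arzel\`a–Ascoli-style argument.

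Concretely, given $\epsilon > 0$, by compactness of $C$ pick a finite $\epsilon$-net $\{x_1,\dots,x_N\} \subset C$. For any $x \in C$, choose an index $i$ with $|x - x_i| < \epsilon$, and estimate via the triangle inequality
\begin{align*}
|T_k x - T x|
&\leq |T_k x - T_k x_i| + |T_k x_i - T x_i| + |T x_i - T x| \\
&= |x - x_i| + |T_k x_i - T x_i| + |x_i - x| \\
&\leq 2\epsilon + \max_{1\leq i\leq N}|T_k x_i - T x_i|,
\end{align*}
where the first equality uses that $T$ and each $T_k$ preserve norms. Since there are only finitely many $x_i$, strong convergence $T_k \to_s T$ makes $\max_i |T_k x_i - T x_i| < \epsilon$ for all sufficiently large $k$, so $\sup_{x \in C}|T_k x - Tx| \leq 3\epsilon$ eventually. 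Hence $T_k|_C \to T|_C$ uniformly, giving $T_k \to_{co} T$.

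I do not expect a genuine obstacle here: the whole argument is a two-line equicontinuity application, and the only structural point worth noting is the appeal to metrizability of $\UH_s$ (Lemma on Polishness) to pass from sequential continuity to continuity of the identity map $\UH_s \to \UH_{co}$. Combined with the trivial reverse continuity, this yields $\UH_{co} = \UH_s$.
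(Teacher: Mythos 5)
Your proof is correct, but it takes a genuinely different and more elementary route than the paper's. The paper argues at the level of compact subsets: it shows, using the Banach--Steinhaus theorem, that every compact set $C \subset \UH_s$ is also compact in $\UH_{co}$ with the same subspace topology, and then invokes Steenrod's $k$-ification functor together with the fact that $\UH_s$, being metrizable, is compactly generated, to conclude that the continuous bijection $\UH_{co} \to \UH_s$ is a homeomorphism. You instead prove directly that the identity $\UH_s \to \UH_{co}$ is continuous: since unitaries are isometries, the family $\{T_k\} \cup \{T\}$ is uniformly $1$-Lipschitz, and your finite $\epsilon$-net estimate $\sup_{x \in C}|T_kx - Tx| \leq 2\epsilon + \max_i |T_kx_i - Tx_i|$ upgrades pointwise to compact-uniform convergence with no functional-analytic input at all --- where the paper needs Banach--Steinhaus to supply equicontinuity, you get it for free from unitarity. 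Your appeal to metrizability of $\UH_s$ (to pass from sequential continuity to continuity) is legitimate, though even that could be avoided: the same $\epsilon$-net computation shows that the strong basic open set $\{S : |Sx_i - Tx_i| < \epsilon,\ 1 \leq i \leq N\}$ is contained in $\{S : \sup_{x\in C}|Sx - Tx| < 3\epsilon\}$, which gives continuity of $\UH_s \to \UH_{co}$ outright. What your approach buys is a short, self-contained argument avoiding both Banach--Steinhaus and the machinery of compactly generated spaces; what the paper's approach buys is a template that transfers to settings where one only controls compact subsets rather than having a uniform Lipschitz bound.
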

\begin{proof}

Let us start by recalling the retraction functor $k$ defined in \cite[Def. 3.1]{Steenrod}. For $X$ a Hausdorff topological space the associated
compactly generated space $k(X)$ is the set $X$ whose topology is defined as follows: a set in $k(X)$ is closed, if its intersection
with every compact set in $X$ is itself closed. By \cite[Thm 3.2]{Steenrod} we know that the identity map $k(X) \to X$ is a continuous map,
that $k(X)$ is compactly generated, that $k(X)$ and $X$ have the same compact sets and that $X=k(X)$ whenever $X$ is compactly generated.

Let $C$ be a compact subset in $\UH_s$. Take a sequence of operators $\{T_n\}_n$ in $C$, since $C$ is compact in $\UH_s$ there exists a convergent subsequence $\{T_{n_k}\}_k$, and since $\UH_s$ is completely metrizable and hence $C$ is furthermore closed, this convergent subsequence
$T_{n_k} \to_s T$ converges to an operator $T \in C$. By the Bannach-Steinhaus Theorem (see \cite[Cor. of Thm 33.1, p. 348]{Treves}), we have that the 
sequence $\{T_{n_k}\}_k$ converges uniformly on every compact set of $\HH$, and therefore we have that $T_{n_k} \to_{co} T$. Thus
the space $C$ is also compact in $\UH_{co}$, and by the same argument as before, the induced topology of $C$ in $\UH_{co}$ agrees with the induced topology of $C$ in $\UH_s$.

We conclude that the spaces $\UH_{co}$ and $\UH_s$ have the same compact sets with the same induced topologies. This implies that
the retraction functor $k$ applied on the map $\UH_{co} \to \UH_s$ induces a homeomorphism
$$k(\UH_{co}) \stackrel{\cong}{\to} k(\UH_s).$$
Then we have the commutative diagram
$$\xymatrix{
k(\UH_{co}) \ar[r] \ar[d]^\cong  &\UH_{co} \ar[d]\\
k(\UH_{s}) \ar[r]^\cong & \UH_s
}$$ 
which implies that $\UH_{co}=\UH_s$.
\end{proof}

\begin{proof}[proof of Theorem \ref{thm:topologies-on-U(H)-agree}]
By Lemma \ref{lem:weak=strong} we know that $\UH_{s^*}=\UH_s=\UH_w$, so the vertical arrow in the middle and the bottom right horizontal arrow of  diagram \eqref{diagram} are homeomorphisms; by Lemma \ref{lem:compact-open=strong} we know that $\UH_{co}=\UH_s$, and then the bottom left horizontal arrow is also a homeomorphism. The proof of Lemma \ref{lem:compact-open=strong}
can also be used to show that $\UH_{co^*}=\UH_{s^*}$. Finally, the right vertical arrow is also a homeomorphism because of Lemma \ref{lem:adjoint-continuous}. Hence diagram \eqref{diagram_B(H)} restricted to $\UH$ becomes

\begin{equation} \xymatrix{
\UH_n \ar[r] & \UH_{co^*} \ar[d] \ar[r]^{\cong} & \UH_{s^*} \ar[d]^{\cong} \ar[r] & \UH_{w^*} \ar[d]^{\cong}  \\
 & \UH_{co} \ar[r]^{\cong} & \UH_{s} \ar[r]^{\cong} & \UH_{w} } \label{diagram}
\end{equation}
Therefore, besides $\UH_n$, every topological group in diagram \eqref{diagram} can be connected using a homeomorphism with the Polish group $\UH_s$. This finishes the proof of Theorem \ref{thm:topologies-on-U(H)-agree}.
\end{proof}

The norm topology on $\UH$ has strictly more open sets than the strong operator topology as can be easily checked with
the following sequence of operators.

 Consider $\HH:= l^2({\mathbb{N}})$ and take the sequence of operators
 $T_k:\HH \to \HH$ such that for all  $x=(\xi_n)\in\HH$ the $n$-th coordinate of $T_kx$ is defined by
$$ (T_k x)_n = \begin{cases}
            \xi_n & n \neq k \\
            \sqrt{-1}\xi_n & n = k.
           \end{cases}
$$ It follows that  $T_k^{*}=T_k^{-1}$ and therefore $\{ T_k \}_k \subset \UH$.
In the strong operator topology the sequence $\{T_k\}_k$ converges to the identity operator $\rm{Id}: \HH \to \HH$, since we have that
$$|T_k x - {\rm Id}x | =|i\xi_k - \xi_k| = |\xi_k||i - 1| \to 0 $$ for all $x=(\xi_n)\in\HH$.
On the other hand, if $ x_k = (\xi_n^{(k)}) \in \HH$ is defined by 
$$ \xi_n^{(k)} = \begin{cases}
            0 & n \neq k \\
            i & n = k,
           \end{cases}
$$ then $|T_kx_k - {\rm Id}x_k| = |-1-i| = \sqrt{2}$ and it follows that $\sup_{|x|\leq 1} \{ |T_kx - {\rm Id}x| \} \not\to 0$ whenever $k\to \infty$. Hence we have that $T_k \to_{s} {\rm Id}$ but $T_k \not \to_n {\rm Id}$.

Therefore we conclude that the operator
topologies defined at the beginning of the chapter reduced to only two once restricted to the unitary group. The unitary
group with the norm topology, making $\UH_n$ into a Banach Lie group, and the strong operator topology, making $\UH_s$
into a Polish group.

\typeout{-------------------------------------- References  ---------------------------------------}
\bibliographystyle{plain}
\bibliography{Topologies-on-U(H)}
\end{document}